\newtheorem{theorem}{Theorem}
\newtheorem{lemma}[theorem]{Lemma}
\theoremstyle{remark}
\newtheorem{remark}[theorem]{Remark}
\newcommand{\n}[1]{\left\|#1 \right\|} 
\newcommand{\la}{\lambda}
\newcommand{\ga}{\gamma}
\newcommand{\N}{\mathbb N}
\newcommand{\lr}[1]{\left\langle #1\right\rangle} 
\DeclareMathOperator{\Id}{Id}
\newcommand{\Hilbert}{\mathcal{H}}
\newcommand{\setto}{\rightrightarrows}
\title{Backward-Forward-Reflected-Backward Splitting for Three Operator Monotone Inclusions}
\author{Janosch Rieger\thanks{School of Mathematics,
                  		     Monash University,
		                     9 Rainforest Walk,
		                     Clayton VIC 3800, \textsc{Australia}.
		                     Email:~\href{mailto:janosch.rieger@monash.edu}
		                                 {janosch.rieger@monash.edu}}
	    \and
	   Matthew K. Tam\thanks{School of Mathematics \& Statistics,
	   	              	     The University of Melbourne,
	   	              	     Parkville VIC 3010, \textsc{Australia}.
	   	              	     Email:~\href{mailto:matthew.tam@unimelb.edu.au}
	   	              	                 {matthew.tam@unimelb.edu.au}}
	   	             \thanks{Institute for Numerical and Applied Mathematics,
	   	                     University of G\"ottingen,
	   	                     Lotztestr.~16--18, 37083 G\"ottingen, \textsc{Germany}}
}	   	              	                 
\begin{document}
\maketitle

\begin{abstract}
	In this work, we propose and analyse two splitting algorithms for finding a zero of the sum of three monotone operators, one of which is assumed to be Lipschitz continuous. Each iteration of these algorithms require one forward evaluation of the Lipschitz continuous operator and one resolvent evaluation of each of the other two operators. By specialising to two operator inclusions, we recover the forward-reflected-backward and the reflected-forward-backward splitting methods as particular cases. The inspiration for the proposed algorithms arises from interpretations of the aforementioned reflected splitting algorithms as discretisations of the continuous-time proximal point algorithm. 
\end{abstract}

\paragraph{Keywords.} operator splitting $\cdot$ monotone operators $\cdot$ dynamical systems

\paragraph{MSC2010.} 49M29           
                     $\cdot$  90C25  
                     $\cdot$  47H05  
                     $\cdot$  47J20  
                     $\cdot$  65K15  

\section{Introduction}
In this work, we propose two new splitting algorithms for finding a zero of the sum of three monotone operators in a real Hilbert space $\Hilbert$ with inner-product 
$\lr{\cdot,\cdot}$ and induced norm $\n{\cdot}$. Precisely, we consider monotone inclusions of the form
\begin{equation}
\label{eq:3op mono inc}
0 \in (A+B+C)(x),
\end{equation}
where $A,C\colon\Hilbert\setto\Hilbert$ are maximally monotone 
operators, the operator $B\colon\Hilbert\to\Hilbert$ is single-valued, monotone 
and Lipschitz continuous, and $(A+B+C)^{-1}(0)\neq\emptyset$. 
We are particularly interested in the case when $B$ is not cocoercive.  
This situation arises, for instance, when considering the first order optimality condition for saddle-point problems 
of the form
\begin{equation}\label{eq:spp}
\min_{x\in\Hilbert_1}\max_{y\in\Hilbert_2} f_1(x)+f_2(x)+\Phi(x,y) - g_1(y)-g_2(y)
\end{equation}
where $f_1,f_2\colon\Hilbert_1\to(-\infty,+\infty]$ and $g_1,g_2\colon\Hilbert_2\to(-\infty,+\infty]$ are proper
lower semicontinuous convex functions and $\Phi\colon\Hilbert_1\times\Hilbert_2\to\mathbb{R}$ is a smooth 
convex-concave function. 
Precisely, the optimality condition for \eqref{eq:spp} is given by \eqref{eq:3op mono inc} with 
$\Hilbert=\Hilbert_1\times\Hilbert_2$ and
\begin{equation}\label{eq:ABC}
  A(x,y) = \binom{\partial f_1(x)}{\partial g_1(y)}, \quad B(x,y)=\binom{\phantom{-}\nabla_x\Phi(x,y)}{-\nabla_y\Phi(x,y)}, \quad   C(x,y) = \binom{\partial f_2(x)}{\partial g_2(y)}.
\end{equation}
The operator $B$ in \eqref{eq:ABC} is Lipschitz continuous whenever $\nabla\Phi$ is,
but even in the simple realisation of \eqref{eq:spp} where $\Phi$ is a bilinear
form, the operator $B$ fails to be cocoercive \cite[Section~1]{forward2018malitsky}. 
Splitting algorithms which do not require cocoercivity of $B$ are therefore 
of general interest for solving the saddle-point problem \eqref{eq:spp}, 
and they currently attract particular attention because of their success 
in training \emph{generative adversarial networks} \cite{daskalakis2018training,ryu2019ode,mishchenko2019revisiting}. 

Until recently, most known splitting algorithms could only directly solve monotone inclusions
 with two operators, instead resorting to a higher-dimensional product space reformulation 
when more than two operators were involved (see, for instance, \cite[Proposition~26.4]{bauschke2017convex}). One of the first schemes to overcome this for three operator inclusions was proposed by Davis~\&~Yin in~\cite{davis2017three} which, in turn, generalises earlier work by  Raguet, Fadili \& Peyr\'e \cite{raguet2013generalized}.
In this connection, see \cite{giselsson2019nonlinear,johnstone2018projective}.
\emph{Davis--Yin splitting} for \eqref{eq:3op mono inc} with stepsize $\la>0$ is given by
\begin{equation}\label{eq:dys}
\left\{\begin{aligned}
 x_k &= J_{\la A}(z_k) \\
 y_k &= J_{\la C}\bigl(2x_k-z_k-\la B(x_k)\bigr) \\
 z_{k+1} &= z_k + y_k - x_k,
\end{aligned}\right.
\end{equation}
where $J_T:=(I+T)^{-1}$ denotes the \emph{resolvent} operator of a maximally monotone operator $T\colon\Hilbert\setto\Hilbert$ which is a single-valued operator with full-domain \cite[Chapter~23]{bauschke2017convex}.
When $B=0$, the method \eqref{eq:dys} reduces to \emph{Douglas--Rachford splitting} and, when $A=0$, it reduces to the \emph{forward-backward method} given by
\begin{equation}\label{eq:fb:intro}
  x_{k+1} = J_{\la C}\bigl(x_k-\la B(x_k)\bigr).
\end{equation}
Thus, like for the forward-backward method, it is necessary that $B$ is cocoercive to 
guarantee convergence of \eqref{eq:dys}. 
Consequently, \eqref{eq:dys} cannot be used to solve \eqref{eq:spp}.

In order to overcome this shortcoming, Ryu~\&~V\~u proposed the 
\emph{forward-reflected-Douglas--Rachford splitting} method in
\cite{ryu2019finding}, which, for stepsizes $\ga>\la>0$, takes the form
\begin{equation}\label{eq:frdr}
\left\{\begin{aligned}
x_{k+1} &= J_{\la A}\bigl(x_k-\la z_k-2\la B(x_k)+\la B(x_{k-1})\bigr) \\
y_{k+1} &= J_{\ga C}\bigl( 2x_{k+1}-x_k+\la z_k \bigr) \\
z_{k+1} &= z_k + \frac{1}{\la}(2x_{k+1}-x_k-y_{k+1}). 
\end{aligned}\right.
\end{equation}
The discovery of this method was computer-assisted and used ideas from the \emph{performance estimation methodology} \cite{taylor2017smooth,ryu2018operator,drori2014performance}.  When $B=0$, the method \eqref{eq:frdr} reduces to Douglas--Rachford splitting, and, when $C=0$, it reduces to the \emph{forward-reflected-backward method} \cite{forward2018malitsky} given by
  \begin{equation}
  \label{eq:forb}
  x_{k+1} = J_{\la C}\bigl(x_k-2\la B(x_k)+\la B(x_{k-1})\bigr) .
  \end{equation} 
Unlike the forward backward method, the iteration \eqref{eq:forb} does not require
$B$ to be cocoercive and can applied to \eqref{eq:spp} when $f_2=g_2=0$
 (see \cite{forward2018malitsky}). As a consequence, the method \eqref{eq:frdr}
 also does not require cocoercivity of $B$. A closely related method with the same 
property is the \emph{reflected-forward backward method}
 \cite{cevher2019reflected,malitsky2015projected} which takes the form
  \begin{equation}
  \label{eq:rfob}
  x_{k+1} = J_{\la C}\bigl(x_k-\la B(2x_k-x_{k-1})\bigr) .
  \end{equation} 

To the best of the authors' knowledge, an explanation for why \eqref{eq:forb} or \eqref{eq:rfob} converge without cocoercivity has not yet been given (beyond their proofs), although there are satisfactory interpretations in the case when $C=0$ (see \cite{ryu2019ode,csetnek2019shadow}).

\bigskip

In the first part of this work, we address this issue by providing interpretations of \eqref{eq:forb} and \eqref{eq:rfob} in the general case as two different discretisations of the same asymptotically stable dynamical system. This interpretation explains their convergence in the absence of cocoercivity. In the second part of this work, we use this interpretation to derive two new three operator splitting algorithms for solving \eqref{eq:3op mono inc} which, in contrast to \eqref{eq:frdr}, use the same stepsize in resolvents of $A$ and $C$. In further contrast to approach used in \eqref{eq:frdr}, our algorithms were not discovered using computer-assistance, but rather they are derived systematically through discretising a continuous-time dynamical system. 

\bigskip

The remainder of this work is structured as follows. In Section~\ref{s:interpreting}, we propose an interpretation of two reflected splitting algorithms for two operator inclusions, namely the forward-reflected-backward and the reflected-forward-backward splitting methods, as discretisations of the continuous-time proximal point algorithm. In Sections~\ref{s:BFoRB} and \ref{s:BRFoB}, we exploit an analogous technique to derive two new algorithms for solving the three operator inclusion \eqref{eq:3op mono inc} and analyse their convergence. More precisely, the scheme in Section~\ref{s:BFoRB} generalises the forward-reflected-backward method, and the scheme in Section~\ref{s:BRFoB} generalises the reflected-forward-backward splitting method.

\section{Interpreting Reflected Splitting}\label{s:interpreting}
In this section, we provide interpretations of the forward-reflected-backward \eqref{eq:forb} and the reflected forward-backward  \eqref{eq:rfob} splitting schemes as discretisations of continuous-time dynamical systems associated with the proximal point algorithm. To this end, in this section, we restrict our attention to the inclusion
  \begin{equation}
  \label{eq:mono inc}
  0\in(B+C)(x)\subseteq\Hilbert, 
  \end{equation}
where $B:\Hilbert\to\Hilbert$ and $C\colon\Hilbert\setto\Hilbert$ are maximally monotone and $B$ is $L$-Lipschitz continuous. 
In other words, we consider the three operator inclusion \eqref{eq:3op mono inc} with $A=0$.

In the absence of stronger assumptions such as cocoercivity of $B$ or strong monotonicity of $B+C$, 
the \emph{forward-backward method} need not converge \cite{chen1997convergence}. 
Recall that, with constant stepsize $\la>0$, this method takes the form
  \begin{equation}\label{eq:fb}
  x_{k+1} = J_{\la C}\bigl(x_k-\la B(x_k)\bigr),
  \end{equation}
and can be interpreted as a discretisation of the dynamical system (see \cite{abbas2014newton,attouch2018convergence})
  $$ \dot{x}(t)+x(t) = J_{\la C}\bigl(x(t)-\la B(x(t))\bigr). $$ 
As a monotone inclusion, this system takes the form
  \begin{equation}\label{eq:fb dyn}
  -\dot{x}(t)\in \la C\bigl(\dot{x}(t)+x(t)\bigr)+\la B\bigl(x(t)\bigr). 
  \end{equation}
We posit that the reliance of \eqref{eq:fb} on cocoercivity of $B$ for convergence arises from the choice of the argument of $B$ in \eqref{eq:fb dyn}. 
Indeed, by augmenting the argument of $B$, we obtain the dynamical system
  $$ -\dot{x}(t)\in \la C\bigl(\dot{x}(t)+x(t)\bigr)+\la B\bigl(\dot{x}(t)+x(t)\bigr) $$ 
which is asymptotically stable (\emph{i.e.,} it has the property that all its trajectories converge to solutions 
of the inclusion \eqref{eq:3op mono inc}) whenever the sum $B+C$ is maximally monotone. 
This fact can be shown by noting its equivalence to the continuous-time proximal point algorithm given by
  \begin{equation}
  \label{eq:ppa}
    \dot{x}(t)+x(t) = J_{\la (B+C)}\bigl( x(t) \bigr).
  \end{equation} 
  
We will now use the dynamical system \eqref{eq:ppa} to interpret the forward-reflected-backward and the reflected forward-backward splitting schemes
\eqref{eq:forb} and \eqref{eq:rfob}. 
To this end, we first decouple $B$ and $C$ to obtain
  \begin{equation}
  \label{eq:ppa alt}
  \dot{x}(t)+x(t) = J_{\la C}\bigl( x(t) -\la B\bigl(\dot{x}(t)+x(t)\bigr)\bigr). 
  \end{equation} 
In this form, the system is not explicit due to the appearance of $\dot{x}(t)$ on the right-hand side. To deal with this difficulty, we approximate $B\bigl(\dot{x}(t)+x(t)\bigr)$ using the linearisation of $B$ at $x(t)$. Denoting $y(t)=B(x(t))$ and assuming sufficient smoothness, we obtain
  \begin{equation}
  \label{eq:lin}
  B\bigl( x(t)+\dot{x}(t) \bigr) \approx B(x(t)) + \mathbf{J}_B(x(t))\dot{x}(t) = y(t)+\dot{y}(t),
  \end{equation} 
where $ \mathbf{J}_B$ denotes the \emph{Jacobian} of $B$ and the the identity $\mathbf{J}_B(x(t))\dot{x}(t)=\dot{y}(t)$ is a consequence of the chain rule. 
Substituting \eqref{eq:lin} into \eqref{eq:ppa alt} gives the system
  \begin{equation}\label{eq:coupled sys}
  \left\{\begin{aligned}
   \dot{x}(t)+x(t) &= J_{\la C}\bigl( x(t) -\la y(t)-\la \dot{y}(t)\bigr) \\
   y(t) &= B\bigl(x(t)\bigr).
  \end{aligned}\right.
  \end{equation}
Let $h>0$. We now approximate the trajectories of \eqref{eq:coupled sys} 
at the time points $(kh)_{k\in\mathbb{N}}$ by discrete trajectories
$(x_k)_{k\in\mathbb{N}}$ and $(y_k)_{k\in\mathbb{N}}$ with
$x_k\approx x(kh)$ and $y_k\approx y(kh)=B(x_{k})$. 
In this notation, using the forward discretisation $\dot{x}(t)\approx \frac{x_{k+1}-x_k}{h}$ and the backward discretisation $\dot{y}(t)\approx \frac{y_k-y_{k-1}}{h}$ in \eqref{eq:coupled sys} gives the scheme
  \begin{equation}\label{eq:forb2}
  x_{k+1} = (1-h)x_k +  hJ_{\la C}\left( x_k -\la B(x_k)-\frac{\la}{h}\bigl( B(x_k)- B(x_{k-1})\bigr) \right).
  \end{equation}
For $h\in(0,1]$, the scheme \eqref{eq:forb2} is precisely the relaxed variant 
of the forward-reflected-backward method studied in 
\cite[Section~4]{forward2018malitsky}. 
In particular, when $h=1$, \eqref{eq:forb2} recovers the standard 
forward-reflected-backward method \eqref{eq:forb}. 
Thus, in summary, \eqref{eq:forb} can be interpreted as 
a discretisation of a linearisation of the proximal point algorithm \eqref{eq:ppa}.

Alternatively, using the forward discretisation $\dot{x}(t)\approx \frac{x_{k+1}-x_k}{h}$ on the left-hand side and the backward discretisation  
$\dot{x}(t)\approx \frac{x_k-x_{k-1}}{h}$ on the right-hand side 
of equation \eqref{eq:ppa alt} leads to the scheme
  \begin{equation}\label{eq:rfb}
  x_{k+1} = (1-h)x_k+hJ_{\la C}\left( x_k -\la B\bigl(x_k+\frac{1}{h}(x_k-x_{k-1})\bigr) \right).
  \end{equation}
When $h=1$, the iteration \eqref{eq:rfb} is precisely the \emph{reflected-forward
backward method} \eqref{eq:rfob}, so the method \eqref{eq:rfob} can interpreted 
as a discretisation of the proximal point algorithm \eqref{eq:ppa}.

The fact that both schemes \eqref{eq:forb2} and \eqref{eq:rfb} converge without
cocoercivity of $B$, while the method \eqref{eq:fb} does not, can therefore be
partly explained by a connection to the proximal point algorithm, which also does
not require cocoercivity for convergence.

\section{Backward-Forward-Reflected-Backward Splitting}\label{s:BFoRB}
In this section, we use the idea of discretising a linearisation of a dynamical system 
to derive a new algorithm for solving an operator monotone inclusion. Recall that we consider the three operator monotone inclusion 
\begin{equation}
 \label{eq:3op mono inc 2}
  0 \in (A+B+C)(x),
\end{equation}
where $A,C\colon\Hilbert\setto\Hilbert$ are maximally monotone operators, 
and $B\colon\Hilbert\to\Hilbert$ is single-valued, monotone and $L$-Lipschitz
continuous. 
Note that, in this case, the sum $B+C$ is also maximally monotone \cite[Corollary~25.2]{bauschke2017convex}.

Let $\la >0$. The \emph{Douglas--Rachford splitting method} can only be directly applied to problems of finding a zero of the sum of two monotone operators. 
Applying a continuous-time version of this method to the inclusion
\eqref{eq:3op mono inc 2} yields the equation
\begin{equation}\label{eq:dr}
\left\{\begin{aligned}
x(t) &= J_{\la A}(z(t)) \\
y(t) &= J_{\la(B+C)}\bigl(2x(t)-z(t)\bigr) \\
\dot{z}(t) &= y(t)-x(t).
\end{aligned}\right.
\end{equation}
All trajectories of this dynamical system converge weakly to solutions of
the inclusion \eqref{eq:3op mono inc 2} (see \cite{boct2017dynamical,csetnek2019shadow}). 
Our aim is to derive a new algorithm in which $B$ and $C$ are decoupled. 
Proceeding as in the previous section, we rewrite the second equation in \eqref{eq:dr} as
$$ y(t) = J_{\la C}\bigl( 2x(t)-z(t)-\la B(y(t)) \bigr). $$
Let $h>0$.  Assuming sufficient smoothness, we may approximate $B(y(t))$ by using the linearisation of $B(y(\cdot))$ at $t-h$. That is,
  \begin{equation}\label{eq:By2}
  B(y(t)) \approx B(y(t-h)) + \mathbf{J}_{B\circ\hspace{0.05ex}y}(t-h)h.
  \end{equation} 
Substituting this approximation into \eqref{eq:dr} gives the system  
\begin{equation}\label{eq:dr-lin}
\left\{\begin{aligned}
x(t) &= J_{\la A}(z(t)) \\
y(t) &= J_{\la C}\bigl(2x(t)-z(t) - \la B(y(t-h)) -\la \mathbf{J}_{B\circ\hspace{0.05ex}y}(t-h)h\bigr) \\
\dot{z}(t)&= y(t)-x(t).
\end{aligned}\right.
\end{equation}
Now we approximate the trajectories of \eqref{eq:dr-lin} at the time points 
$(kh)_{k\in\mathbb{N}}$ by discrete trajectories
$(x_k)_{k\in\mathbb{N}}$, $(y_k)_{k\in\mathbb{N}}$ and $(z_k)_{k\in\mathbb{N}}$ 
with $x_k\approx x(kh)$, $y_k\approx y(kh)$ and $z_k\approx z(kh)$. 
Using the forward discretisation $\dot{z}(t)\approx\frac{z_{k+1}-z_k}{h}$ and the backward discretisation 
\[\mathbf{J}_{B\circ\hspace{0.05ex}y}(t-h)\approx \frac{B(y_{k-1})-B(y_{k-2})}{h}\] 
yields the iteration 
\begin{equation*}
\left\{\begin{aligned}
x_k &= J_{\la A}(z_k) \\ 
y_k &= J_{\la C}\bigl( 2x_k-z_k-2\la B(y_{k-1})+\la B(y_{k-2}) \bigr) \\
z_{k+1} &= z_k+h(y_k-x_k).
\end{aligned}\right.
\end{equation*}
For simplicity, we fix $h=1$, so the discrete-time system becomes
\begin{equation}\label{eq:alg}
\left\{\begin{aligned}
	x_k &= J_{\la A}(z_k) \\ 
	y_k &= J_{\la C}\bigl( 2x_k-z_k-2\la B(y_{k-1})+\la B(y_{k-2}) \bigr) \\
	z_{k+1} &= z_k+y_k-x_k.
\end{aligned}\right.
\end{equation}
We refer to \eqref{eq:alg} as the 
\emph{backward-forward-reflected-backward method} and assume throughout this
section that the initial points $z_0,y_{-1},y_{-2}\in\Hilbert$ are chosen
arbitrarily.

When $B=0$, the iteration \eqref{eq:alg} reduces to the Douglas--Rachford
method for $A+C$, and when $A=0$, it reduces to the forward-reflected-backward
splitting method for $B+C$. Note also that \eqref{eq:alg} can equivalently written as 
the system of inclusions
\begin{equation}\label{eq:inclusion}
\left\{\begin{aligned}
  \la A(x_k) &\ni z_k-x_k &\\
 \la C(y_k)  &\ni 2x_k-z_k-y_k-2\la B(y_{k-1})+\la B(y_{k-2})  \\
  z_{k+1}    &= z_k+y_k-x_k. 
\end{aligned}\right.
\end{equation}
The following lemma will be key in proving convergence of the iteration \eqref{eq:alg}.

\begin{lemma}\label{lem:key}
Consider points $x,z\in\Hilbert$ such that $z-x\in\la A(x)$ and $x-z\in\la(B+C)(x)$. 
Then the sequences 
$(x_k)_{k\in\mathbb{N}}$, $(y_k)_{k\in\mathbb{N}}$ and $(z_k)_{k\in\mathbb{N}}$ 
given by iteration \eqref{eq:alg} satisfy
\begin{multline}\label{eq:lemma}
\n{z_{k+1}-z}^2 + 2\la\lr{B(y_{k})-B(y_{k-1}),x-y_{k}} + \n{z_{k+1}-z_k}^2 \\
\leq \n{z_k-z}^2 + 2\la\lr{B(y_{k-1})-B(y_{k-2}),x-y_{k-1}} \\
+ 2\la\lr{B(y_{k-1})-B(y_{k-2}),y_{k-1}-y_{k}}.
\end{multline}	
\end{lemma}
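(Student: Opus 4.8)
The plan is to read \eqref{eq:lemma} as a one-step energy estimate for the Lyapunov-type quantity $\n{z_k-z}^2+2\la\lr{B(y_{k-1})-B(y_{k-2}),x-y_{k-1}}$, and to produce it by combining the monotonicity inequalities of the three operators with a single expansion of $\n{z_{k+1}-z}^2$. I would work throughout with the inclusion form \eqref{eq:inclusion}. The hypotheses on the test point say $z-x\in\la A(x)$ and $x-z\in\la(B+C)(x)$; since $B$ is single-valued, the latter splits as $x-z-\la B(x)\in\la C(x)$, so that $x$ is an admissible comparison point for both resolvents.

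First I would record the three monotonicity inequalities. From $z_k-x_k\in\la A(x_k)$ and $z-x\in\la A(x)$, monotonicity of $A$ gives $\n{x_k-x}^2\l\lr{z_k-z,x_k-x}$. From the second line of \eqref{eq:inclusion} together with $x-z-\la B(x)\in\la C(x)$, monotonicity of $C$ applied to the pair $(y_k,x)$ gives one nonnegative inner product, and monotonicity of $B$ applied to $(y_k,x)$ gives $\lr{B(y_k)-B(x),y_k-x}\g0$.

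The crucial and most delicate step is the bookkeeping of the $B$-terms. Adding the $C$- and $B$-inequalities (scaled by $2\la$ where appropriate), I would write the reflected combination $-2\la B(y_{k-1})+\la B(y_{k-2})$ as $-\la B(y_{k-1})-\la\bigl(B(y_{k-1})-B(y_{k-2})\bigr)$; the value $B(x)$ then cancels between the $C$- and $B$-contributions, leaving only the consecutive differences $B(y_k)-B(y_{k-1})$ and $B(y_{k-1})-B(y_{k-2})$. Splitting $x-y_k=(x-y_{k-1})+(y_{k-1}-y_k)$ in the second of these, the differences reorganise into exactly the three inner-product terms $2\la\lr{B(y_k)-B(y_{k-1}),x-y_k}$, $2\la\lr{B(y_{k-1})-B(y_{k-2}),x-y_{k-1}}$ and $2\la\lr{B(y_{k-1})-B(y_{k-2}),y_{k-1}-y_k}$ appearing in \eqref{eq:lemma}. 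I expect this regrouping to be the main obstacle, as it is what encodes the forward-reflected structure and must leave no stray $B$-terms behind.

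It then remains to supply the quadratic terms. Using $z_{k+1}-z_k=y_k-x_k$ I would expand $\n{z_{k+1}-z}^2=\n{z_k-z}^2+2\lr{z_k-z,y_k-x_k}+\n{y_k-x_k}^2$ and $\n{z_{k+1}-z_k}^2=\n{y_k-x_k}^2$, substitute the combined inequality from the previous step, and collect terms. After the inner products involving $z_k-z$ and $y_k$ cancel, the whole estimate collapses to the scalar inequality $2\n{x_k-x}^2\l2\lr{z_k-z,x_k-x}$, which is precisely the monotonicity inequality for $A$ recorded at the outset. This closes the argument.
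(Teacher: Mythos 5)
Your proposal is correct and follows essentially the same route as the paper's proof: it combines the monotonicity inequalities of $\la A$ at $(x_k,x)$, of $\la C$ at $(y_k,x)$, and of $B$ at $(y_k,x)$, cancels $B(x)$, regroups the reflected term into consecutive differences with the split $x-y_k=(x-y_{k-1})+(y_{k-1}-y_k)$, and closes the quadratic bookkeeping via $z_{k+1}-z_k=y_k-x_k$. The only cosmetic difference is that you expand $\n{z_{k+1}-z}^2$ directly where the paper invokes the identity \eqref{eq:parallelogram}; these are algebraically identical.
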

\begin{proof}
By monotonicity of $\la A$, we have
\begin{equation}\label{eq:l1}
0 \leq \lr{(z-x)-(z_k-x_k),x-x_k}.
\end{equation}
Using monotonicity of $\la C$ and \eqref{eq:alg} gives
\begin{equation}
\label{eq:l2}
\begin{aligned}
0 &\leq \lr{(x-z)-\la B(x)+z_k-2x_k+y_k+2\la B(y_{k-1})-\la B(y_{k-2}),x-y_k} \\
  &= \lr{(x-z)-(x_k-z_k),x-x_k} + \lr{z_{k+1}-z_k,z-z_{k+1}}   \\
  &\hspace{3.2cm}+ \la \lr{ B(y_{k-1})-B(x),x-y_k} + \la \lr{B(y_{k-1})-B(y_{k-2}),x-y_k}.
\end{aligned}
\end{equation}
From monotonicity of $\la B$, it follows that
\begin{equation}
\label{eq:l3}
\la \lr{B(y_{k-1})-B(x),x-y_{k}} \leq -\la \lr{B(y_k)-B(y_{k-1}),x-y_{k}}. 
\end{equation} 
By summing the inequalities \eqref{eq:l1}, \eqref{eq:l2} and \eqref{eq:l3}, and using the identity
\begin{equation}\label{eq:parallelogram}
 \lr{z_{k+1}-z_k,z-z_{k+1}} = \frac{1}{2}\left(\n{z_k-z}^2-\n{z_{k+1}-z_k}^2-\n{z_{k+1}-z}^2  \right), 
\end{equation}
we obtain
\begin{multline*}
0 \leq \n{z_k-z}^2 - \n{z_{k+1}-z}^2 - \n{z_{k+1}-z_k}^2 \\
- 2 \la \lr{ B(y_{k})-B(y_{k-1}),x-y_k} + 2\la \lr{B(y_{k-1})-B(y_{k-2}),x-y_k},
\end{multline*}
from which the claimed inequality \eqref{eq:lemma} follows. 
\end{proof}

For the convenience of the reader and clarify of presentation, we recall the following well-known result for reference in the proof of Theorem~\ref{th:main}.

\begin{lemma}[Opial's lemma]\label{Opial}
Let $(z_k)_{k\in\N}$ be a sequence in $\Hilbert$ and let $\Omega$ be a non-empty subset of $\Hilbert$. Suppose the following assertions hold.
\begin{enumerate}[(a)]
\item\label{l:opial_a} For every $z\in\Omega$, the sequence $(\|z_k-z\|)_{k\in\N}$ converges.
\item\label{l:opial_b} Every weak sequential cluster point of $(z_k)_{k\in\mathbb{N}}$ belongs to $\Omega$. 
\end{enumerate}
Then $(z_k)_{k\in\mathbb{N}}$ converges weakly to a point in~$\Omega$.
\end{lemma}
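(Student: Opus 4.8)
The plan is to conclude weak convergence by showing that $(z_k)_{k\in\N}$ has exactly one weak sequential cluster point, which must then be its weak limit. First I would note that hypothesis~(a), applied to any single point $z\in\Omega$ (such a point exists since $\Omega\neq\emptyset$), makes $(\n{z_k-z})_{k\in\N}$ convergent and hence bounded, so $(z_k)_{k\in\N}$ is itself bounded. Because $\Hilbert$ is a Hilbert space, bounded sequences are weakly sequentially compact, so $(z_k)_{k\in\N}$ admits at least one weak sequential cluster point, and by hypothesis~(b) every such point lies in $\Omega$.

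The heart of the argument is the uniqueness of this cluster point. Suppose $u$ and $v$ are weak sequential cluster points, attained along subsequences $z_{k_n}\wto u$ and $z_{l_n}\wto v$; by hypothesis~(a) the full sequences $(\n{z_k-u})_{k\in\N}$ and $(\n{z_k-v})_{k\in\N}$ converge, to limits $\ell_u$ and $\ell_v$ say. The key device is the polarisation identity
$$ \n{z_k-u}^2 - \n{z_k-v}^2 = 2\lr{z_k, v-u} + \n{u}^2 - \n{v}^2, $$
valid for every $k\in\N$. Passing to the limit along $(z_{k_n})$, where $\lr{z_{k_n}, v-u}\to\lr{u,v-u}$ by weak convergence, and separately along $(z_{l_n})$, where $\lr{z_{l_n}, v-u}\to\lr{v,v-u}$, gives two expressions for the common value $\ell_u^2-\ell_v^2$. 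Subtracting them leaves $0 = 2\lr{u-v, v-u} = -2\n{u-v}^2$, so $u=v$.

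Finally, I would upgrade the uniqueness of the cluster point to weak convergence: if $(z_k)_{k\in\N}$ did not converge weakly to its unique cluster point $u$, then some subsequence would remain outside a fixed weak neighbourhood of $u$, yet by weak sequential compactness this subsequence would itself possess a cluster point, which by the previous step equals $u$ --- a contradiction. Since $u\in\Omega$ by hypothesis~(b), the proof is complete.

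I expect the uniqueness step to be the main obstacle, and in particular the careful passage of the polarisation identity to the limit along two distinct subsequences, exploiting that each full norm-sequence converges so that its subsequential limits coincide with $\ell_u$ and $\ell_v$. By contrast, the existence of cluster points and the deduction of weak convergence from a single cluster point are soft consequences of the reflexivity of $\Hilbert$.
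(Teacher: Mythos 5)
Your proof is correct. The paper does not actually prove this lemma---it simply cites \cite[Lemma~2.47]{bauschke2017convex}---and your argument is precisely the standard proof given there: boundedness of $(z_k)_{k\in\N}$ from hypothesis~(a), existence of weak sequential cluster points by weak sequential compactness of bounded sets in $\Hilbert$, membership of those cluster points in $\Omega$ via hypothesis~(b) (which is what licenses applying~(a) to $u$ and $v$ in the uniqueness step), and the polarisation/Opial trick $\n{z_k-u}^2-\n{z_k-v}^2 = 2\lr{z_k,v-u}+\n{u}^2-\n{v}^2$ evaluated along the two subsequences to force $u=v$.
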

\begin{proof}
See, for instance, \cite[Lemma~2.47]{bauschke2017convex}.
\end{proof}

The following theorem is our main result regarding convergence of the backward-forward-reflected-backward method \eqref{eq:alg}. 
In what follows, we make use of the fact that the resolvent $J_{\la A}$ is
\emph{firmly nonexpansive} (see, for instance, \cite[Proposition~23.10]{bauschke2017convex}), that is,
$$ \n{J_{\la A}(x)-J_{\la A}(y)}^2 + \n{(\Id-J_{\la A})(x)-(\Id-J_{\la A})(y)}^2 \leq \n{x-y}^2 \quad\forall x,y\in\Hilbert. $$

\begin{theorem}\label{th:main}
  Suppose $(A+B+C)^{-1}(0)\neq\emptyset$, let $\la \in(0,\frac{1}{8L})$, and
  consider sequences $(z_k)_{k\in\N}, (x_k)_{k\in\N}$ and $(y_k)_{k\in\N}$ 
  given by \eqref{eq:alg} for arbitrary initial points 
  $z_0,y_{-1},y_{-2}\in\Hilbert$. 
  Then the following assertions hold:
  \begin{enumerate}[(a)]
  	\item\label{th:main:a} The sequence $(z_k)_{k\in\N}$ converges weakly 
  	to a point $\bar{z}\in\Hilbert$.
  	\item\label{th:main:b} The sequences $(x_k)_{k\in\N}$ and $(y_k)_{k\in\N}$ converge weakly 
  	to a point $\bar{x}\in\Hilbert$.
  	\item\label{th:main:c} We have $\bar{x}=J_{\la A}(\bar{z})\in(A+B+C)^{-1}(0)$. 
  \end{enumerate}
\end{theorem}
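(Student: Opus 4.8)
The plan is to verify the hypotheses of Opial's lemma (Lemma~\ref{Opial}) for the sequence $(z_k)$ with $\Omega$ equal to the set of fixed points $\{z\in\Hilbert : z-x\in\la A(x),\ x-z\in\la(B+C)(x) \text{ for some }x\}$, which by the characterisation of $J_{\la A}$ is nonempty precisely because $(A+B+C)^{-1}(0)\neq\emptyset$. The core is to promote the one-step estimate of Lemma~\ref{lem:key} into a genuine Fej\'er-type monotonicity statement. To do this, I would define the energy functional
\begin{equation*}
\phi_k := \n{z_k-z}^2 + 2\la\lr{B(y_{k-1})-B(y_{k-2}),x-y_{k-1}},
\end{equation*}
so that \eqref{eq:lemma} reads $\phi_{k+1} + \n{z_{k+1}-z_k}^2 \le \phi_k + 2\la\lr{B(y_{k-1})-B(y_{k-2}),y_{k-1}-y_{k}}$. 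The crux is then to control the two cross terms using $L$-Lipschitz continuity of $B$: by Cauchy--Schwarz, $2\la\lr{B(y_{k-1})-B(y_{k-2}),x-y_{k-1}} \ge -2\la L\n{y_{k-1}-y_{k-2}}\n{x-y_{k-1}}$, and similarly for the residual term. The aim is to absorb these into the $\n{z_{k+1}-z_k}^2$ terms (and differences $\n{y_k-y_{k-1}}^2$) via Young's inequality, exploiting the smallness $\la<\tfrac{1}{8L}$.

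First I would relate the increments of $y$ and $z$ to make the above absorption possible. From \eqref{eq:alg} one has $z_{k+1}-z_k = y_k-x_k$ and $x_k = J_{\la A}(z_k)$, so firm nonexpansiveness of $J_{\la A}$ gives $\n{x_k-x_{k-1}}\le\n{z_k-z_{k-1}}$ (and moreover $\n{x_k-x_{k-1}}^2 + \n{(z_k-x_k)-(z_{k-1}-x_{k-1})}^2 \le \n{z_k-z_{k-1}}^2$). I would then derive a bound for $\n{y_k-y_{k-1}}$ in terms of $\n{z_k-z_{k-1}}$, $\n{z_{k+1}-z_k}$ and $\n{B(y_{k-1})-B(y_{k-2})}$ by subtracting consecutive instances of the resolvent inclusion for $C$ and using monotonicity of $C$; this is the step where the Lipschitz constant of $B$ re-enters and where the numerical constant $\tfrac{1}{8}$ is pinned down. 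Combining everything, the goal is to produce an inequality of the form $\psi_{k+1} + \rho\,\n{z_{k+1}-z_k}^2 \le \psi_k$ for some $\rho>0$ and some nonnegative quantity $\psi_k$ equivalent (up to constants) to $\n{z_k-z}^2$, which simultaneously delivers hypothesis~\eqref{l:opial_a} of Opial's lemma and the summability $\sum_k\n{z_{k+1}-z_k}^2<\infty$. In particular $\n{z_{k+1}-z_k}=\n{y_k-x_k}\to0$, and boundedness of $(z_k)$ forces boundedness of $(x_k),(y_k)$.

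For hypothesis~\eqref{l:opial_b}, I would take a weak sequential cluster point $\bar z$ of $(z_k)$, say $z_{k_j}\wto\bar z$. Since $x_{k_j}=J_{\la A}(z_{k_j})$ and $x_{k_j}-y_{k_j}=z_{k_j}-z_{k_j+1}\to0$, both $(x_{k_j})$ and $(y_{k_j})$ share a weak cluster point $\bar x$; Lipschitz continuity of $B$ together with $\n{y_k-y_{k-1}}\to0$ (a consequence of the summability above) controls the forward terms $B(y_{k-1}),B(y_{k-2})$. The final step is to pass to the limit in the system \eqref{eq:inclusion} using the demiclosedness of the graphs of the maximally monotone operators $A$ and $C$ (equivalently, weak-strong sequential closedness of their graphs), concluding $\bar z-\bar x\in\la A(\bar x)$ and $\bar x-\bar z\in\la(B+C)(\bar x)$; adding these gives $0\in(A+B+C)(\bar x)$, so $\bar z\in\Omega$. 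Opial's lemma then yields (a), the weak convergence $z_k\wto\bar z$; applying $J_{\la A}$ and using $y_k-x_k\to0$ gives (b) with $\bar x=J_{\la A}(\bar z)$, and (c) follows from the limiting inclusions.

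I expect the main obstacle to be the second paragraph: setting up the correct energy $\psi_k$ and carrying out the Young-inequality bookkeeping so that every cross term and every forward-difference term is absorbed with a strictly positive leftover coefficient on $\n{z_{k+1}-z_k}^2$. The delicate point is that Lemma~\ref{lem:key} only telescopes the symmetrised cross terms, so obtaining a clean descent requires the auxiliary estimate on $\n{y_k-y_{k-1}}$ and a careful choice of the weights, and it is precisely this balancing that produces the threshold $\la<\tfrac{1}{8L}$.
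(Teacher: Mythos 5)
Your proposal follows essentially the same route as the paper's proof: Opial's lemma applied to $(z_k)_{k\in\N}$ with the same set $\Omega$, a Lyapunov functional built by augmenting the telescoped quantity from Lemma~\ref{lem:key} with weighted increment terms $\n{z_k-z_{k-1}}^2$, absorption of the cross terms via firm nonexpansivity of $J_{\la A}$, Lipschitz continuity of $B$ and Young's inequality (which is exactly where the threshold $\la L<\tfrac18$ arises), and demiclosedness of the graphs of the maximally monotone operators to identify weak cluster points. The only simplification worth noting is that the auxiliary bound on $\n{y_k-y_{k-1}}$ does not require subtracting resolvent inclusions for $C$: it follows directly from the identity $y_k=z_{k+1}-z_k+x_k$ together with firm nonexpansivity of $J_{\la A}$.
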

\begin{proof}
We shall apply Lemma~\ref{Opial} to sequence $(z_k)_{k\in\N}$ and the set $\Omega$ defined by
\begin{equation} \label{eq:omega}
\Omega:=\{z\in\Hilbert: J_{\lambda A}(z)\in(A+B+C)^{-1}(0),\
(J_{\lambda A}-\Id)(z)\in\lambda(B+C)(J_{\lambda A}(z))\},
\end{equation}
which we claim is nonempty. Indeed, since $(A+B+C)^{-1}(0)\neq\emptyset$ by assumption, there exist $x\in(A+B+C)^{-1}(0)$ and $z\in\Hilbert$ such that $z-x\in\la A(x)$ and $x-z\in\la (B+C)(x)$. Combining these two inclusions yields  $x=J_{\la A}(z)$ and 
 $$(J_{\la A}-\Id)(z) = x-z \in \la (B+C)(x)=\la (B+C)(J_{\la A}(z)). $$
Thus $z\in\Omega$ and hence $\Omega\neq\emptyset$.

Next, consider an arbitrary $z\in\Omega$ and denote $x:=J_{\lambda A}(z)$. 
By Lemma~\ref{lem:key}, we have
\begin{multline}\label{eq:proof tele}
\n{z_{k+1}-z}^2 + 2\la\lr{B(y_{k})-B(y_{k-1}),x-y_{k}} + \n{z_{k+1}-z_k}^2 \\
\leq \n{z_k-z}^2 + 2\la\lr{B(y_{k-1})-B(y_{k-2}),x-y_{k-1}} \\
+ 2\la\lr{B(y_{k-1})-B(y_{k-2}),y_{k-1}-y_{k}}
\end{multline}
for all $k\in\N$.
To estimate the last term, first note that firm nonexpansivity of $J_{\la A}$ gives
\begin{equation}\label{eq:bad ineq}
\begin{aligned}
\n{y_{k-1}-y_{k-2}}^2
	  &= \n{(z_k-z_{k-1}+x_{k-1})-(z_{k-1}-z_{k-2}+x_{k-2})}^2 \\
	  &\leq 2\n{z_k-z_{k-1}}^2 + 2\n{(x_{k-1}-z_{k-1})-(x_{k-2}-z_{k-2})}^2 \\
	  &\leq 2\n{z_k-z_{k-1}}^2 + 2\n{z_{k-1}-z_{k-2}}^2,
\end{aligned}
\end{equation}
and thus Lipschitz continuity of $B$ yields
\begin{equation}\label{jrinsert}
\begin{aligned}
&\frac{2}{L}\lr{B(y_{k-1})-B(y_{k-2}),y_{k-1}-y_{k}}\\
&\quad \leq \n{y_{k-1}-y_{k-2}}^2 + \n{y_k-y_{k-1}}^2 \\
&\quad \leq 2\n{z_{k-1}-z_{k-2}}^2 + 4\n{z_k-z_{k-1}}^2 + 2\n{z_{k+1}-z_{k}}^2
\end{aligned}\end{equation}
for all $k\in\N$.
Consider the sequence $(\varphi_k)_{k\in\mathbb{N}}$ given by
\begin{multline*}
\varphi_k := \n{z_k-z}^2 + 2\la\lr{B(y_{k-1})-B(y_{k-2}),x-y_{k-1}} \\
+ \frac{3}{4}\n{z_k-z_{k-1}}^2 + 2\la L\n{z_{k-1}-z_{k-2}}^2. 
\end{multline*}
Substituting inequality \eqref{eq:proof tele} into \eqref{jrinsert}, 
and setting $\epsilon:=(1-2\la L)-3/4>0$ yields
\begin{equation}
\label{eq:key}
\varphi_{k+1} + \epsilon \n{z_{k+1}-z_k}^2 \leq \varphi_k \implies \varphi_{k+1} +\epsilon\sum_{i=0}^k\n{z_{i+1}-z_i}^2 \leq \varphi_0\quad\forall k\in\mathbb{N}.
\end{equation}
On the other hand, Lipschitz continuity of $B$, nonexpansivity of $J_{\la A}$ 
and \eqref{eq:bad ineq} implies
\begin{align*}
& \frac{2}{L}\lr{B(y_{k})-B(y_{k-1}),x-y_{k}} \\
&\quad\leq \n{y_k-y_{k-1}}^2 + \n{(z_{k+1}-x_{k+1})-(z_k-x_k)+(x_{k+1}-x)}^2 \\
&\quad\leq  2\n{z_k-z_{k-1}}^2 + 2\n{z_{k+1}-z_k}^2 + 2\n{(z_{k+1}-x_{k+1})-(z_k-x_k)}^2 +2\n{x_{k+1}-x}^2 \\
&\quad\leq  2\n{z_k-z_{k-1}}^2 + 4\n{z_{k+1}-z_k}^2 + 2\n{z_{k+1}-z}^2,
\end{align*}
which yields the lower bound
  $$ \varphi_{k+1} \geq (1-2\la L)\n{z_{k+1}-z}^2 + \left(\frac{3}{4}-4\la L\right)\n{z_{k+1}-z_k}^2 \geq \frac{3}{4}\n{z_{k+1}-z}^2\geq 0\quad\forall\,k\in\N.$$
By combining this with inequality \eqref{eq:key}, we deduce that 
$(\varphi_k)_{k\in\N}$ converges,  ${\n{z_{k+1}-z_k}\to0}$, and $(z_k)_{k\in\N}$ is bounded. Since $x_k=J_{\la A}(z_k)$ and $x=J_{\la A}(z)$, nonexpansivity of $J_{\la A}$ 
implies that $\n{x_{k+1}-x_k}\to0$ 
and $(x_k)_{k\in\N}$ is bounded. Because of the identity $y_k=z_{k+1}-z_k+x_k$, we then have that
$\n{y_{k+1}-y_k}\to0$ and $(y_k)_{k\in\N}$ is bounded. 
It then follows that
  \begin{equation*}
   \lim_{k\to\infty}\n{z_k-z}^2=\lim_{k\to\infty}\varphi_k,
  \end{equation*}
which establishes Lemma~\ref{Opial}\eqref{l:opial_a}.

Now, let $z\in\Hilbert$ be a weak sequential cluster point of $(z_k)_{k\in\N}$. Since $(x_k)_{k\in\N}$ is bounded, it follows that there exists $x\in\Hilbert$ such that $(z,x)$ is a weak sequential cluster point of $((z_k,x_k))_{k\in\N}$. Next, we note that  \eqref{eq:inclusion} implies the inclusion
\begin{multline}\label{eq:prod incl}
\binom{z_k-z_{k+1}}{z_{k}-z_{k+1}}-\la\binom{0}{B(y_{k-1})-B(y_{k-2})} 
-\la\binom{0}{B(y_{k-1})-B(y_{k})}\\
\in \left(\begin{bmatrix}(\la A)^{-1}&0 \\ 0&\la (B+C) \\ \end{bmatrix} + \begin{bmatrix} 0 & -\Id \\ \Id & 0 \\ \end{bmatrix} \right)\binom{z_k-x_k}{z_{k+1}-z_k+x_k}.
\end{multline}
Since $A$ and $B+C$ are maximally monotone operators, appealing to \cite[Lemma~1]{csetnek2019shadow} and taking the limit in \eqref{eq:prod incl} along a subsequence of $((z_k,x_k))_{k\in\N}$ which converges weakly to $(z,x)$, yields
\begin{equation*}
\binom{0}{0}
\in \left(\begin{bmatrix}(\la A)^{-1}&0 \\ 0&\la (B+C) \\ \end{bmatrix} + \begin{bmatrix} 0 & -\Id \\ \Id & 0 \\ \end{bmatrix} \right)\binom{z-x}{x}.
\end{equation*}
The first inclusion gives $z-x\in\la A(x)$, which is equivalent to $x=J_{\la A}(z)$. 
The second inclusion gives $x-z\in\la(B+C)(x)$ which implies $
(J_{\lambda A}-\Id)(z)\in\lambda(B+C)(J_{\lambda A}(z))$. Thus, altogether, we have that $z\in\Omega$ which establishes Lemma~\ref{Opial}\eqref{l:opial_b}.

Having verified all of its assumptions, we now invoke Lemma~\ref{Opial} to deduce that $(z_k)_{k\in\N}$ converges weakly to a point $\bar{z}\in\Omega$. Finally, let $\bar{x}\in\Hilbert$ be an arbitrary weak sequential cluster point of the bounded sequence $(x_k)_{k\in\N}$. Then, by using an argument analogous to the above, we deduce that $\bar{x}=J_{\la A}(\bar{z})$. Thus, $(x_k)_{k\in\N}$ possesses precisely one weak sequential cluster point and hence $(x_k)_{k\in\N}$ is weakly convergent. The remainder of the proof easily follows from the identity $y_k=z_k-z_{k+1}+x_k$ and the definition of $\Omega$. 
\end{proof}

\begin{remark}
By setting $A=0$ in \eqref{eq:alg}, we have $x_k=z_k$ and Theorem~\ref{th:main} reduces to \cite[Corollary~2.6]{forward2018malitsky}, albeit with a worse stepsize. On the other hand, setting $C=0$ gives a scheme which seems to be new, even in the two operator case.
\end{remark}

\begin{remark}\label{r:ryu}
In \cite{ryu2019finding}, Ryu~\&~V\~u proposed a related method known as the
\emph{forward-reflected-Douglas--Rachford (FRDR) splitting} for inclusion 
\eqref{eq:3op mono inc} which takes the form
\begin{equation*}
\left\{\begin{aligned}
x_{k+1} &= J_{\la A}\bigl(x_k-\la u_k-2\la B(x_k)+\la B(x_{k-1})\bigr) \\
y_{k+1} &= J_{\ga C}\bigl( 2x_{k+1}-x_k+\la u_k \bigr) \\
u_{k+1} &= u_k + \frac{1}{\la}(2x_{k+1}-x_k-y_{k+1}). 
\end{aligned}\right.
\end{equation*}
For its convergence, the constants $\la,\ga>0$ used in the resolvents of $A$ and $C$ are required to satisfy
  \begin{equation}
  \label{eq:rv constants}
    0 < \la < \frac{\ga}{1+2L\ga}.
  \end{equation} 
Note that, in particular, this means that $\la < \ga $ whereas in the setting of Theorem~\ref{th:main} both resolvents use the same constant. 
On the other hand, by taking $\ga$ sufficiently large in
\eqref{eq:rv constants}, the constant $\la$ be choose arbitrarily close 
to $\frac{1}{2L}$, in line with \cite[Corollary~2.6]{forward2018malitsky}. 
We also remark that, in practice, there is usually no advantage to having both
resolvents with the same stepsize ({\em i.e.,} $\la=\ga$).
\end{remark}

\section{Backward-Reflected-Forward-Backward Splitting}\label{s:BRFoB}
In this section, we derive a second algorithm for solving the inclusion 
\eqref{eq:3op mono inc 2} by means of discretising the system \eqref{eq:dr} which,
in decoupled form, is given by
\begin{equation}\label{eq:dr2}
\left\{\begin{aligned}
x(t) &= J_{\la A}(z(t)) \\
y(t) &= J_{\la C}\bigl( 2x(t)-z(t)-\la B(y(t)) \bigr) \\
\dot{z}(t) &= y(t)-x(t).
\end{aligned}\right.
\end{equation}
Let $h>0$. Assuming sufficient smoothness, we approximate $y(t)$ on the right-hand side of the second equation by using the linearisation of $y$ at $t-h$. That is, we have
\begin{equation}\label{eq:y2}
y(t) \approx y(t-h) + \dot{y}(t-h)h
\end{equation} 
Substituting this approximation into \eqref{eq:dr2} gives 
\begin{equation}\label{eq:dr2-lin}
\left\{\begin{aligned}
x(t) &= J_{\la A}(z(t)) \\
y(t) &= J_{\la C}\left( 2x(t)-z(t)-\la B\bigl(y(t-h) + \dot{y}(t-h)h \bigr)\right) \\
\dot{z}(t) &= y(t)-x(t).
\end{aligned}\right.
\end{equation}
Now we discretise the trajectories in \eqref{eq:dr2-lin} at the time points 
$(kh)_{k\in\mathbb{N}}$ which we denoted by $z_k:=z(kh), x_k:=x(kh)$ and 
$y_k:=y(kh)$. Using the forward discretisation $\dot{z}(t)\approx\frac{z_{k+1}-z_k}{h}$
 and the backward discretisation $\dot{y}(t-h)\approx \frac{y_{k-1}-y_{k-2}}{h}$ yields
\begin{equation*}
\left\{\begin{aligned}
x_k &= J_{\la A}(z_k) \\ 
y_k &= J_{\la C}\bigl( 2x_k-z_k-\la B(2y_{k-1}-y_{k-2} ) \bigr) \\
z_{k+1} &= z_k+h(y_k-x_k),
\end{aligned}\right.
\end{equation*}
As in Section~\ref{s:BFoRB}, we assume $h=1$ for simplicity, so that the system becomes
\begin{equation}\label{eq:alg2}
\left\{\begin{aligned}
x_k &= J_{\la A}(z_k) \\ 
y_k &= J_{\la C}\bigl( 2x_k-z_k-\la B(2y_{k-1}-y_{k-2} ) \bigr) \\
z_{k+1} &= z_k+y_k-x_k.
\end{aligned}\right.
\end{equation}

We refer to \eqref{eq:alg2} as the 
\emph{backward-reflected-forward-backward method} and assume that the initial
points $z_0,y_{-1},y_{-2}\in\Hilbert$ are chosen arbitrarily.
Note that, when $B=0$, \eqref{eq:alg2} reduces to the 
Douglas--Rachford algorithm for $A+C$, and when $A=0$, it reduces to
the reflected-forward-backward splitting method for $B+C$. 
For convenience, we use the notation
$$ \bar{z}_k=2z_k-z_{k-1},\quad \bar{x}_k=2x_k-x_{k-1},\quad 
\bar{y}_k=2y_k-y_{k-1}. $$
The iteration \eqref{eq:alg2} can then be equivalently written as the system of inclusions
\begin{equation}\label{eq:alg2 inc}
\left\{\begin{aligned}
\la A(x_k) &= z_k-x_k \\
\la C(y_k) &\ni 2x_k-z_k-y_k-\la B(\bar{y}_{k-1}) \\
z_{k+1} &= z_k+y_k-x_k.
\end{aligned}\right.
\end{equation}
Before proving weak convergence of \eqref{eq:alg2}, we require the
following preparatory lemma.
\begin{lemma}\label{l:key2}
Consider points $x,z\in\Hilbert$ such that $z-x\in\la A(x)$ and $x-z\in\la(B+C)(x)$.
Then the sequences $(z_k)_{k\in\N}, (x_k)_{k\in\N}$ 
and $(y_k)_{k\in\N}$ given by \eqref{eq:alg2} satisfy	
\begin{multline}
\n{z_{k+1}-z}^2 + 2\la\lr{B(\bar{y}_{k-1})-B(x),y_k-y_{k-1}} + 2\n{z_{k+1}-z_k}^2 
+\n{z_{k+1}-\bar{z}_k}^2 \\
\leq \n{z_k-z}^2 + 2\la\lr{B(\bar{y}_{k-2})-B(x),y_{k-1}-y_{k-2}}
+\n{z_k-z_{k-1}}^2 \\
+2\la\lr{B(\bar{y}_{k-1})-B(\bar{y}_{k-2}),\bar{y}_{k-1}-y_k}.
\end{multline}
\end{lemma}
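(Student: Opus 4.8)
The plan is to mirror the proof of Lemma~\ref{lem:key}, deriving an energy (Lyapunov) inequality by comparing the iterates both with the reference point $(x,z)$ and with the previous iterate, and then regrouping the $B$-terms using the reflection identity $\bar{y}_{k-1}=2y_{k-1}-y_{k-2}$ so that a telescoping quantity emerges. Throughout I would use the two reference inclusions $z-x\in\la A(x)$ and $x-z-\la B(x)\in\la C(x)$ (the latter being the decoupled form of $x-z\in\la(B+C)(x)$), together with the inclusions \eqref{eq:alg2 inc}.

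First I would record the \emph{reference-point comparison}. Writing monotonicity of $\la A$ between $x_k$ and $x$, and monotonicity of $\la C$ between $y_k$ and $x$, and then eliminating $x_k,y_k$ via $z_{k+1}-z_k=y_k-x_k$, the geometric parts combine cleanly; applying the polarisation identity \eqref{eq:parallelogram} to the surviving term $\lr{z_{k+1}-z_k,z-z_{k+1}}$ yields an inequality of the shape
\[\n{z_{k+1}-z}^2+\n{z_{k+1}-z_k}^2\le\n{z_k-z}^2+2\la\lr{B(\bar{y}_{k-1})-B(x),x-y_k}.\]
This isolates the $\n{\cdot-z}^2$ telescoping and a single $B$-inner-product, but it does \emph{not} produce the second-order terms $2\n{z_{k+1}-z_k}^2$ and $\n{z_{k+1}-\bar{z}_k}^2$ of the claim; indeed, for $B=0$ the above already saturates at $\n{z_{k+1}-z}^2+\n{z_{k+1}-z_k}^2\le\n{z_k-z}^2$.

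The second-order terms I would obtain from a separate \emph{consecutive-iterate comparison}: using firm nonexpansiveness of the resolvents $J_{\la A}$ and $J_{\la C}$ between steps $k$ and $k-1$, together with the identities $z_{k+1}-z_k=y_k-x_k$ and $z_{k+1}-\bar{z}_k=(y_k-y_{k-1})-(x_k-x_{k-1})$. For $B=0$ this is exactly firm nonexpansiveness of the Douglas--Rachford operator and gives $\n{z_{k+1}-z_k}^2+\n{z_{k+1}-\bar{z}_k}^2\le\n{z_k-z_{k-1}}^2$; for general $B$ the same computation produces this inequality up to $B$-inner-product corrections of order $\la$ stemming from the perturbed argument $2x_k-z_k-\la B(\bar{y}_{k-1})$ of $J_{\la C}$. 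Adding the two comparisons reproduces precisely the quadratic pattern of the claim, namely $\n{z_{k+1}-z}^2+2\n{z_{k+1}-z_k}^2+\n{z_{k+1}-\bar{z}_k}^2$ on the left against $\n{z_k-z}^2+\n{z_k-z_{k-1}}^2$ on the right.

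The remaining task, and the step I expect to be the main obstacle, is to reconcile the $B$-inner-products. Here I would split $x-y_k=(x-\bar{y}_{k-1})+(\bar{y}_{k-1}-y_k)$, use the reflection identity in the form $\bar{y}_{k-1}-y_k=(y_{k-1}-y_k)+(y_{k-1}-y_{k-2})$, and add and subtract $B(\bar{y}_{k-2})$, so as to manufacture the telescoping pair $2\la\lr{B(\bar{y}_{k-1})-B(x),y_k-y_{k-1}}$ (moved to the left) and $2\la\lr{B(\bar{y}_{k-2})-B(x),y_{k-1}-y_{k-2}}$ (kept on the right), plus the single remainder $2\la\lr{B(\bar{y}_{k-1})-B(\bar{y}_{k-2}),\bar{y}_{k-1}-y_k}$. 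The delicate point is that the order-$\la$ corrections produced by the consecutive-iterate estimate must combine \emph{exactly} with the isolated term $2\la\lr{B(\bar{y}_{k-1})-B(x),x-y_k}$ so that nothing survives beyond the stated remainder; any residual sign-definite contribution (such as $\lr{B(\bar{y}_{k-1})-B(x),x-\bar{y}_{k-1}}\le0$) is absorbed by monotonicity of $B$ or by discarding the nonnegative squared terms dropped in the firm-nonexpansiveness step. This remainder is exactly the quantity later controlled, in the convergence theorem, via Lipschitz continuity of $B$ and the stepsize restriction on $\la$.
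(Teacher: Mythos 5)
Your plan is essentially the paper's proof: your reference-point comparison is the paper's inequality \eqref{eq:inter}, your consecutive-iterate comparison (firm nonexpansiveness of the resolvents between steps $k$ and $k-1$ is just monotonicity of $\la A$ and $\la C$ written out, which is how the paper phrases it in \eqref{eq:l2-4}--\eqref{eq:l2-5}) is the paper's \eqref{eq:inter2}, and the lemma is their sum. The reconciliation you flag as the main obstacle does close exactly rather than needing any absorption: after routing $B(\bar{y}_{k-1})-B(x)$ through $B(\bar{y}_{k-2})$ in the first comparison, its leftover $2\la\lr{B(\bar{y}_{k-2})-B(x),\bar{y}_{k-1}-y_k}$ cancels against the term $2\la\lr{B(\bar{y}_{k-2})-B(x),y_k-\bar{y}_{k-1}}$ produced by the second comparison, leaving precisely the stated remainder.
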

\begin{proof}
	By monotonicity of $\la A$, we have
	\begin{equation}\label{eq:l2-1}
	0 \leq \lr{(z-x)-(z_k-x_k),x-x_k}.
	\end{equation}
	Using monotonicity of $\la C$ and iteration \eqref{eq:alg2} gives
	\begin{equation}\label{eq:l2-2}
	\begin{aligned}
	0 &\leq \lr{(x-z)-\la B(x)-2x_k+z_k+y_k+\la B(\bar{y}_{k-1}),x-y_k} \\
	&= \lr{(x-z)-(x_k-z_k),x-x_k} + \lr{z_{k+1}-z_k,z-z_{k+1}}+\la\lr{B(\bar{y}_{k-1})-B(x),x-y_k}.
	\end{aligned}  
	\end{equation}
	From monotonicity of $\la B$, it follows that
	\begin{equation}\label{eq:l2-3}
	\la \lr{B(\bar{y}_{k-1})-B(x),x-y_k} \leq \la \lr{B(\bar{y}_{k-1})-B(x),\bar{y}_{k-1}-y_k}.
	\end{equation}
	By summing the inequalities \eqref{eq:l2-1}, \eqref{eq:l2-2} and \eqref{eq:l2-3}, and using the identity \eqref{eq:parallelogram}
	we obtain
	\begin{multline}\label{eq:inter}
	\n{z_{k+1}-z}^2 + \n{z_{k+1}-z_k}^2 - \n{z_k-z}^2 \\
	\leq 2\la\lr{B(\bar{y}_{k-1})-B(\bar{y}_{k-2}),\bar{y}_{k-1}-y_k} + 2\la\lr{B(\bar{y}_{k-2})-B(x),\bar{y}_{k-1}-y_k}.
	\end{multline}
	Now, by monotonicity of $\la A$, we have
	\begin{equation}\label{eq:l2-4}
	0  \leq \lr{(z_k-x_k)-(z_{k-1}-x_{k-1}),x_k-x_{k-1}}.
	\end{equation}
	Using monotonicity of $\la C$ gives
	\begin{equation}\label{eq:l2-5}
	\begin{aligned}
	0 &\leq \lr{\left(2x_k-z_k-y_k-\la B(\bar{y}_{k-1})\right)-\left(2x_{k-1}-z_{k-1}-y_{k-1}-\la B(\bar{y}_{k-2})\right),y_k-y_{k-1}} \\
	&=  \lr{\left(x_k-z_{k+1}-\la B(\bar{y}_{k-1})\right)-\left(x_{k-1}-z_{k}-\la B(\bar{y}_{k-2})\right),y_k-y_{k-1}} \\
	&= \lr{(x_k-z_{k})-(x_{k-1}-z_{k-1}),x_k-x_{k-1}} + \lr{z_{k}-z_{k+1},(z_{k+1}-z_k)-(z_k-z_{k-1})} \\
	&\hspace{2.25cm}  + \la\lr{B(x)-B(\bar{y}_{k-1}),y_k-y_{k-1}}  + \la\lr{B(\bar{y}_{k-2})-B(x),y_k-\bar{y}_{k-1}}\\
	&\hspace{8.1cm} + \la\lr{B(\bar{y}_{k-2})-B(x),y_{k-1}-y_{k-2}}.
	\end{aligned}
	\end{equation}
	By summing \eqref{eq:l2-4} and \eqref{eq:l2-5}, and using the identity
	$$ \lr{z_{k}-z_{k+1},(z_{k+1}-z_k)-(z_k-z_{k-1})} = \frac{1}{2}\left( \n{z_k-z_{k-1}}^2-\n{z_{k+1}-z_k}^2 - \n{z_{k+1}-\bar{z}_k}^2\right), $$
    we obtain
    \begin{multline}\label{eq:inter2}
    \n{z_{k+1}-z_k}^2 + \n{z_{k+1}-\bar{z}_k}^2 - \n{z_k-z_{k-1}}^2 + 2\la\lr{B(\bar{y}_{k-1})-B(x),y_k-y_{k-1}}  \\
    \leq  2\la\lr{B(\bar{y}_{k-2})-B(x),y_{k-1}-y_{k-2}} + 2\la\lr{B(\bar{y}_{k-2})-B(x),y_k-\bar{y}_{k-1}} .
    \end{multline}
    The claimed inequality follows by summing \eqref{eq:inter} and \eqref{eq:inter2}.
\end{proof}

The following theorem is our main result regarding convergence of the 
backward-reflected-forward-backward method \eqref{eq:alg2}.
\begin{theorem}\label{th:main2}
	Suppose $(A+B+C)^{-1}(0)\neq\emptyset$, let $\la \in(0,\frac{1}{22 L})$, 
	and consider the sequences $(z_k)_{k\in\N}, (x_k)_{k\in\N}$ and $(y_k)_{k\in\N}$ 
    given by \eqref{eq:alg2} for arbitrary initial points $z_0,y_{-1},y_{-2}\in\Hilbert$.
	Then the following assertions hold:
	\begin{enumerate}[(a)]
		\item The sequence $(z_k)_{k\in\N}$ converges weakly to a point 
		$\bar{z}\in\Hilbert$.
		\item The sequences $(x_k)_{k\in\N}$ and $(y_k)_{k\in\N}$ converge weakly 
		to a point $\bar{x}\in\Hilbert$.
		\item We have $\bar{x}=J_{\la A}(\bar{z})\in(A+B+C)^{-1}(0)$. 
	\end{enumerate}
\end{theorem}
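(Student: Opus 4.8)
The plan is to mirror the proof of Theorem~\ref{th:main}, applying Opial's Lemma (Lemma~\ref{Opial}) to the sequence $(z_k)_{k\in\N}$ together with the set $\Omega$ defined in \eqref{eq:omega}. Since the characterisation of $\Omega$ depends only on the operators $A$, $B$ and $C$, the argument establishing $\Omega\neq\emptyset$ transfers verbatim: any $x\in(A+B+C)^{-1}(0)$ and a suitable $z$ with $z-x\in\la A(x)$ and $x-z\in\la(B+C)(x)$ combine to give $z\in\Omega$. It then remains to verify the two hypotheses of Opial's Lemma.

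For hypothesis \eqref{l:opial_a}, I would fix $z\in\Omega$, set $x:=J_{\la A}(z)$, and build a Lyapunov function $\varphi_k$ from the inequality in Lemma~\ref{l:key2}. The key structural feature is that the inner-product terms involving $B(\bar{y}_{k-1})-B(x)$ and $B(\bar{y}_{k-2})-B(x)$ sit on opposite sides of that inequality with a shifted index, so — exactly as in the definition of $\varphi_k$ in Theorem~\ref{th:main} — they can be absorbed into the energy alongside a multiple of $\n{z_k-z_{k-1}}^2$. Moreover, Lemma~\ref{l:key2} furnishes the extra dissipative terms $2\n{z_{k+1}-z_k}^2+\n{z_{k+1}-\bar{z}_k}^2$ on the left, which form the budget against which the cross terms are charged. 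Concretely, I would take
$$\varphi_k := \n{z_k-z}^2 + 2\la\lr{B(\bar{y}_{k-2})-B(x),y_{k-1}-y_{k-2}} + \sum_{j=1}^{m} c_j\n{z_{k-j+1}-z_{k-j}}^2,$$
for a small fixed number $m$ of past steps and constants $c_j>0$ to be pinned down at the end.

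The principal obstacle is estimating the residual cross term $2\la\lr{B(\bar{y}_{k-1})-B(\bar{y}_{k-2}),\bar{y}_{k-1}-y_k}$. Using $L$-Lipschitz continuity of $B$ and Young's inequality, I would bound it by squared norms $\n{\bar{y}_{k-1}-\bar{y}_{k-2}}^2$ and $\n{\bar{y}_{k-1}-y_k}^2$. Since $\bar{y}_j=2y_j-y_{j-1}$, each of these expands into a combination of consecutive differences $y_i-y_{i-1}$, and invoking the identity $y_k=z_{k+1}-z_k+x_k$ with firm nonexpansivity of $J_{\la A}$ (precisely as in \eqref{eq:bad ineq}) converts every such term into a controlled sum of $\n{z_{i+1}-z_i}^2$ over a bounded index range. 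This is where the analysis is heavier than in Theorem~\ref{th:main}: the reflected iterates $\bar{y}$ reach back three steps, so more $z$-difference terms accumulate, forcing $m\geq 3$ and shrinking the admissible stepsize. Tracking these constants carefully is what produces the threshold $\la<\frac{1}{22L}$ and guarantees some $\epsilon>0$ with $\varphi_{k+1}+\epsilon\n{z_{k+1}-z_k}^2\leq\varphi_k$. Telescoping yields $\sum_k\n{z_{k+1}-z_k}^2<\infty$, hence $\n{z_{k+1}-z_k}\to0$ and boundedness of $(z_k)$; a matching lower bound of the form $\varphi_{k+1}\geq \text{(positive constant)}\cdot\n{z_{k+1}-z}^2\geq0$ shows $(\varphi_k)$ converges. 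Nonexpansivity of $J_{\la A}$ and $y_k=z_{k+1}-z_k+x_k$ then give $\n{x_{k+1}-x_k}\to0$, $\n{y_{k+1}-y_k}\to0$ and boundedness of $(x_k),(y_k)$, so the inner-product term in $\varphi_k$ vanishes in the limit and $\lim_k\n{z_k-z}^2=\lim_k\varphi_k$ exists, establishing Lemma~\ref{Opial}\eqref{l:opial_a}.

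For hypothesis \eqref{l:opial_b}, I would recast the iteration \eqref{eq:alg2 inc} as a product-space inclusion of the same type as \eqref{eq:prod incl}, built from the skew-symmetric coupling and the operator $\diag((\la A)^{-1},\la(B+C))$, so that the right-hand inputs converge weakly to $(z-x,x)$ along a subsequence while the perturbations on the left tend to zero (using $\n{z_{k+1}-z_k}\to0$ and Lipschitz continuity of $B$). Appealing to \cite[Lemma~1]{csetnek2019shadow} and maximal monotonicity of $A$ and $B+C$, the limit inclusion delivers $z-x\in\la A(x)$ and $x-z\in\la(B+C)(x)$, i.e.\ $z\in\Omega$. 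Opial's Lemma then gives weak convergence of $(z_k)$ to some $\bar{z}\in\Omega$, and weak convergence of $(x_k)$ and $(y_k)$ to $\bar{x}=J_{\la A}(\bar{z})$ with $\bar{x}\in(A+B+C)^{-1}(0)$ follows exactly as at the end of the proof of Theorem~\ref{th:main}, via the identity $y_k=z_k-z_{k+1}+x_k$ and the definition of $\Omega$.
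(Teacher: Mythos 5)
Your overall strategy coincides with the paper's: Opial's lemma applied to $(z_k)_{k\in\N}$ with the same set $\Omega$ from \eqref{eq:omega}, a Lyapunov function built from Lemma~\ref{l:key2} absorbing the shifted inner-product terms, Young's inequality plus firm nonexpansivity of $J_{\la A}$ and the identity $y_k=z_{k+1}-z_k+x_k$ to control the residual cross term, and the product-space inclusion with \cite[Lemma~1]{csetnek2019shadow} for the cluster-point argument. All of that is sound and matches the paper.

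There is, however, one concrete gap in your Lyapunov ansatz. You propose
$\varphi_k = \n{z_k-z}^2 + 2\la\lr{B(\bar{y}_{k-2})-B(x),y_{k-1}-y_{k-2}} + \sum_{j=1}^{m} c_j\n{z_{k-j+1}-z_{k-j}}^2$,
i.e.\ only \emph{first} differences of $z$. But the cross-term estimate necessarily produces second-difference contributions: expanding $\bar{y}_{k-1}-y_k$ and $\bar{y}_{k-1}-\bar{y}_{k-2}$ yields terms proportional to $\n{z_{k+1}-\bar{z}_k}^2$ and $\n{z_k-\bar{z}_{k-1}}^2$ (in the paper, with coefficients $7\la L$ and $14\la L$ respectively). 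The lagged term $\n{z_k-\bar{z}_{k-1}}^2$ cannot be charged against the dissipation $\n{z_{k+1}-\bar{z}_k}^2$ that Lemma~\ref{l:key2} supplies on the left at index $k+1$; it must be carried in the energy, which is why the paper's $\varphi_k$ contains the second-difference term $\frac{7}{11}\n{z_k-\bar{z}_{k-1}}^2$ — the balance $14\la L\leq\frac{7}{11}$ is precisely what produces the threshold $\la<\frac{1}{22L}$. If instead you bound $\n{z_k-\bar{z}_{k-1}}^2\leq 2\n{z_k-z_{k-1}}^2+2\n{z_{k-1}-z_{k-2}}^2$ so as to stay within your first-difference ansatz, you discard the $\n{z_{k+1}-\bar{z}_k}^2$ dissipation and inflate the coefficients; a quick tally of the resulting constraints on the $c_j$ closes the telescoping only for roughly $\la L<\frac{1}{78}$, not $\frac{1}{22}$. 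So your argument proves a qualitatively correct but quantitatively weaker statement; to obtain the theorem as stated you must enlarge the Lyapunov function to include a multiple of $\n{z_k-\bar{z}_{k-1}}^2$ (and correspondingly adjust the lower bound on $\varphi_{k+1}$, which also picks up a $\n{z_{k+1}-\bar{z}_k}^2$ term).
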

\begin{proof}
    The proof strategy is analogous to Theorem~\ref{th:main} and uses the same nonempty set $\Omega$ defined in \eqref{eq:omega}. Consider an arbitrary $z\in\Omega$ and denote $x:=J_{\lambda A}(z)$. By Lemma~\ref{l:key2}, we have
	\begin{multline}\label{eq:tele3}
	\n{z_{k+1}-z}^2 + 2\la\lr{B(\bar{y}_{k-1})-B(x),y_k-y_{k-1}} + 2\n{z_{k+1}-z_k}^2 + \n{z_{k+1}-\bar{z}_k}^2  \\
	\leq \n{z_k-z}^2 + 2\la\lr{B(\bar{y}_{k-2})-B(x),y_{k-1}-y_{k-2}} +  \n{z_k-z_{k-1}}^2 \\
       + 2\la\lr{B(\bar{y}_{k-1})-B(\bar{y}_{k-2}),\bar{y}_{k-1}-y_k}.
	\end{multline}
	We now estimate the last term in \eqref{eq:tele3}. To this end, first observe that  firm nonexpansivity of $J_{\la A}$ implies
	\begin{align*}
	&\n{(\bar{z}_{k-1}-\bar{x}_{k-1})-(z_k-x_k)}^2 \\
      &\quad\leq 2\n{(z_k-x_k)-(z_{k-1}-x_{k-1})}^2 + 2\n{(z_{k-1}-x_{k-1})-(z_{k-2}-x_{k-2})}^2 \\
	  &\quad\leq 2\n{z_{k}-z_{k-1}}^2 + 2\n{z_{k-1}-z_{k-2}}^2.
	\end{align*}
	Using this inequality, we deduce that
	\begin{equation}\label{eq:alpha ineq}
	\begin{aligned}
	  \n{\bar{y}_{k-1}-y_k}^2 
	    &= \n{(\bar{z}_k-\bar{z}_{k-1}+\bar{x}_{k-1})-(z_{k+1}-z_k+x_k)}^2 \\
	    &\leq (1+6)\n{z_{k+1}-\bar{z}_k}^2 + \left(1+\frac{1}{6}\right)\n{(\bar{z}_{k-1}-\bar{x}_{k-1})-(z_k-x_k)}^2 \\
	    &\leq 7\n{z_{k+1}-\bar{z}_k}^2 + \frac{7}{3}\left( \n{z_{k}-z_{k-1}}^2 + \n{z_{k-1}-z_{k-2}}^2\right).
	\end{aligned}
	\end{equation}
Note that the inequality \eqref{eq:bad ineq} from the proof Theorem~\ref{th:main} is still valid for \eqref{eq:alg2} as the third lines of \eqref{eq:alg} and \eqref{eq:alg2} are identical. By combining \eqref{eq:alpha ineq} with \eqref{eq:bad ineq}, we obtain
	\begin{equation*}
	\begin{aligned}
	&\n{\bar{y}_{k-1}-\bar{y}_{k-2}}^2  \\
	&\quad \leq 2\n{y_{k-1}-y_{k-2}}^2 + 2\n{y_{k-1}-\bar{y}_{k-2}}^2 \\
	&\quad \leq 4\n{z_k-z_{k-1}}^2 + \frac{26}{3}\n{z_{k-1}-z_{k-2}}^2 + \frac{14}{3}\n{z_{k-2}-z_{k-3}}^2 + 14\n{z_{k}-\bar{z}_{k-1}}^2.
	\end{aligned}
	\end{equation*}				
    Thus using \eqref{eq:alpha ineq} and the previous inequality yields
    \begin{equation}\label{eq:last estimate}
	\begin{aligned}
	&\frac{2}{L}\lr{B(\bar{y}_{k-1})-B(\bar{y}_{k-2}),\bar{y}_{k-1}-y_k} \\
	&\quad\leq \n{\bar{y}_{k-1}-\bar{y}_{k-2}}^2+\n{\bar{y}_{k-1}-y_k}^2 \\
	&\quad\leq \frac{19}{3}\n{z_k-z_{k-1}}^2 + 11\n{z_{k-1}-z_{k-2}}^2 + \frac{14}{3}\n{z_{k-2}-z_{k-3}}^2 \\
	&\hspace{5.8cm} + 7\n{z_{k+1}-\bar{z}_k}^2 + 14\n{z_{k}-\bar{z}_{k-1}}^2
	\end{aligned}		    
	\end{equation}		
We define the sequence $(\varphi_k)_{k\in\N}$ by
	\begin{multline}
	\varphi_k := \n{z_k-z}^2 + 2\la\lr{B(\bar{y}_{k-2})-B(x),y_{k-1}-y_{k-2}} +  \left(1+22\la L\right)\n{z_k-z_{k-1}}^2 \\
	  +\frac{47}{3}\la L\n{z_{k-1}-z_{k-2}}^2 
	    + \frac{14}{3}\la L\n{z_{k-2}-z_{k-3}}^2 + \frac{7}{11}\n{z_{k}-\bar{z}_{k-1}}^2.
	\end{multline}	
	Substituting \eqref{eq:last estimate} into the estimate 
	\eqref{eq:tele3} and setting $\epsilon:=1-22\la L>0$ yields
	\begin{equation}\label{eq:tele 3}
	\varphi_{k+1}+\epsilon\n{z_{k+1}-z_k}^2 \leq \varphi_k \implies 
	\varphi_{k+1}+\sum_{i=0}^k\n{z_{i+1}-z_i}^2\leq \varphi_0\quad\forall\,k\in\N.
	\end{equation}
	Next, we derive a lower bound for $\varphi_{k+1}$. To this end, note that firm nonexpansivity of $J_{\la A}$ implies
	\begin{align*}
	&\n{(\bar{z}_{k}-\bar{z}_{k-1}+\bar{x}_{k-1})-(z-z+x)}^2 \\
	&\quad \leq 2\n{\bar{z}_{k}-z}^2 + 2\n{(\bar{x}_{k-1}-\bar{z}_{k-1})-(x-z)}^2 \\
	&\quad \leq 2\n{\bar{z}_{k}-z}^2 + 4\n{(x_{k-1}-z_{k-1})-(x-z)}^2+ 4\n{(x_{k-1}-z_{k-1})-(x_{k-2}-z_{k-2})}^2 \\
	&\quad \leq 4\n{z_{k+1}-z}^2 + 4\n{z_{k+1}-\bar{z}_{k}}^2 + 4\n{z_{k-1}-z}^2 + 4\n{z_{k-1}-z_{k-2}}^2 \\
	&\quad \leq 12\n{z_{k+1}-z}^2 + 16\n{z_{k+1}-z_{k}}^2 + 16\n{z_{k}-z_{k-1}}^2  + 4\n{z_{k-1}-z_{k-2}}^2 + 4\n{z_{k+1}-\bar{z}_{k}}^2.
	\end{align*}
	Thus, using Lipschitz continuity of $B$ and  \eqref{eq:bad ineq} 
	gives
	\begin{align*}
	&\frac{2}{L}\lr{B(\bar{y}_{k-1})-B(x),y_{k}-y_{k-1}} \\
	&\quad\leq \frac{1}{2}\n{(\bar{z}_{k}-\bar{z}_{k-1}+\bar{x}_{k-1})-(z-z+x)}^2 + 2\n{y_{k}-y_{k-1}}^2 \\
    &\quad \leq 6\n{z_{k+1}-z}^2 + 12\n{z_{k+1}-z_{k}}^2 + 12\n{z_{k}-z_{k-1}}^2  + 2\n{z_{k-1}-z_{k-2}}^2 + 2\n{z_{k+1}-\bar{z}_{k}}^2.
	\end{align*}		
Altogether, we have the lower bound
	\begin{multline*}
	\varphi_{k+1} \geq (1-6\la L) \n{z_{k+1}-z}^2 + (1+10\la L)\n{z_{k+1}-z_{k}}^2 + \frac{11}{3}\la L\n{z_{k}-z_{k-1}}^2 \\
	+\frac{8}{3}\la L\n{z_{k-1}-z_{k-2}}^2 +\left(\frac{7}{11}-2\la L\right)\n{z_{k+1}-\bar{z}_{k}}^2 
	\geq  \frac{6}{11} \n{z_{k+1}-z}^2 \geq 0. 
	\end{multline*}
	By combining this with \eqref{eq:tele 3}, 
	we deduce that $(\varphi_k)_{k\in\N}$ converges,
	$\n{z_{k+1}-z_k}\to0$, and $(z_k)_{k\in\N}$ is bounded. 
	Arguing as in Theorem~\ref{th:main}, it then follows that 
	  $$ \lim_{k\to\infty}\n{z_k-z}^2 = \lim_{k\to\infty}\varphi_k, $$
	which establishes Lemma~\ref{Opial}\eqref{l:opial_a}. Next, we note that  \eqref{eq:alg2 inc} implies the inclusion
    \begin{multline*}
   \binom{z_k-z_{k+1}}{z_{k}-z_{k+1}} + \la\binom{0}{B(y_{k})-B(\bar{y}_{k-1})}\\
    \in \left(\begin{bmatrix}(\la A)^{-1}&0 \\ 0&\la (B+C) \\ \end{bmatrix} + \begin{bmatrix} 0 & -\Id \\ \Id & 0 \\ \end{bmatrix} \right)\binom{z_k-x_k}{z_{k+1}-z_k+x_k}.
    \end{multline*}
	The remainder of the proof is now analogous to Theorem~\ref{th:main}.
\end{proof}

\begin{remark}
  By using slightly tighter estimates in the current proof, numerics suggest that the upper bound $\la L < \frac{1}{22}$ can be improved slightly to approximately $\frac{1}{20}$. However, since the arithmetic in the resulting proof becomes significantly more complex, we have decided to present a the present slightly sub-optimal version for the sake of presentation. Moreover, as the main novelty of Theorem~\ref{th:main2} is its connection to a continuous-time dynamical system, the precise value of this upper-bound is not our main concern.
\end{remark}

\section{Conclusions}
In this work, we provided an intuitive interpretation to explain convergence of
the forward-reflected-backward and reflected-forward-backward methods in the absence of cocoercivity. More precisely, we showed that these methods can be understood as two different discretions of the continuous-time proximal point algorithm which corresponds to an asymptotically stable dynamical system. This insight allowed us to derive two new three operator splitting algorithms, neither of which relies on cocoercivity for convergence. Future work will investigate whether the insights gained from Section~\ref{s:interpreting} can be combine with the three operator resolvent-splitting scheme with minimal lifting from from \cite[Section~4]{ryu2018operator} to derive a four operator scheme which exploits forward evaluations or with \cite{artacho2019computing} to compute the resolvent of three operator sums.

\paragraph{Acknowledgements.} 
This work was supported in part by a Robert Bartnik Visiting Fellowship from the 
School of Mathematics at Monash University. MKT is the recipient of a Discovery Early Career Research Award (DE200100063) from the Australian Research Council.

\bibliographystyle{abbrv}
\bibliography{biblio}

\end{document}